\newtheorem{theorem}{Theorem}[section]
\newtheorem{definition}{Definition}[section]
\newtheorem{problem}{Problem}
\newtheorem{proposition}{Proposition}[section]
\newenvironment{proof}[1][Proof]{\noindent\textbf{#1.} }{\ \rule{0.5em}{0.5em}}
\begin{document}

\title{Geometrical Equivalence and Action Type Geometrical Equivalence of
Group Representations}
\author{J. Sim\~{o}es da Silva, \and A. Tsurkov. \\
Mathematical Department, CCET,\\
Federal University of Rio Grande do Norte (UFRN),\\
Av. Senador Salgado Filho, 3000,\\
Campus Universit\'{a}rio, Lagoa Nova, \\
Natal - RN - Brazil - CEP 59078-970,\\
josenildosimoesdasilva@gmail.com,\\
arkady.tsurkov@gmail.com}
\maketitle

\begin{abstract}
The universal algebraic geometry of group representations was considered in 
\cite{PT}. Thereat the concepts of geometrical equivalence and action type
geometrical equivalence of group representations were defined. It was proved 
\cite[Corollary 2 from Proposition 4.2.]{PT} that if two representations are
geometrically equivalent then they are action type geometrically equivalent.
Also it was remarked \cite[Remark 5.1]{PT} that if two representations $%
(V_{1},G_{1})$ and$\ (V_{2},G_{2})$ are action type geometrically equivalent
and groups $G_{1}$ and $G_{2}$ are geometrically equivalent, the
representations $(V_{1},G_{1})$ and$\ (V_{2},G_{2})$ are not necessarily
geometrically equivalent. But some specific counterexample was not
presented. In this paper we present the example of two representations $%
(V_{1},G_{1})$ and$\ (V_{2},G_{2})$ which are action type geometrically
equivalent and groups $G_{1}$ and $G_{2}$ are geometrically equivalent, but
the representations $(V_{1},G_{1})$ and$\ (V_{2},G_{2})$ are not
geometrically equivalent.
\end{abstract}

\section{Introduction}

\setcounter{equation}{0}

All definitions of the basic notions of the universal algebraic geometry can
be found, for example, in \cite{PlotkinVarCat}, \cite{PlotkinNotions}, \cite%
{PlotkinSame} and \cite{PP}. Also, there are fundamental papers \cite{BMR}, 
\cite{MR} and \cite{DMR2}, \cite{DMR5}.

Some problems of the universal algebraic geometry of many-sorted universal
algebras were considered also in \cite{PT}, \cite{ShestTsur}, \cite%
{TsurkovManySortes} and \cite{TsurkovRepLie}.

We consider the representations of groups over vector spaces over an
arbitrary fixed field $K$. This field is fixed in all our considerations. In
this article the representation of group is a pair $(V,G)$, where $V$ is a
vector space over field $K$ and $G$ is a group. The signature of this
algebraic object includes all the operations in the vector spaces $V$
(multiplication by every scalar $\lambda \in K$ we consider as unary
operation), all the operations in the group $G$ and the operation of action
of the $G$ on vector spaces $V$. We denote this operation by $\circ $:%
\[
V\times G\ni \left( v,g\right) \rightarrow v\circ g\in V. 
\]%
From here and below we will write briefly "representation" instead
"representation of group". The homomorphism $(\alpha ,\beta )$: $%
(V,G)\rightarrow (W,H)$ from representation $(V,G)$ to representation $(W,H)$
is a pair, where $\alpha :V\rightarrow W$ is a linear mapping and $\beta
:G\rightarrow H$ is homomorphism of groups, such that for every $v\in V$ and
every $g\in G$ the equality 
\[
\alpha (v\circ g)=\alpha (v)\circ \beta (g) 
\]%
holds. The reader can see that we consider representations as $2$-sorted
universal algebras: the first sort is a sort of vectors of vector spaces and
the second sort is a sort of elements of groups. This approach to the
representations the reader can find also in \cite{PV}, \cite{Vovsi}, \cite%
{PT}, \cite{TsurkovManySortes}.

The variety of all the representations of groups over a fixed field $K$ will
be denoted by $REP-K$.

As usually, we say that

\begin{definition}
The representation $W\left( X,Y\right) =\left( U\left( X,Y\right) ,H\left(
X,Y\right) \right) $ is called the \textbf{free representation generated by
the sets }$X$\textbf{\ and }$Y$ if, for every $\left( V,G\right) \in REP-K$
and every mappings $f_{1}:X\rightarrow V$ and $f_{2}:Y\rightarrow G$, there
is only one homomorphism of representations $\left( \alpha ,\beta \right)
:\left( U\left( X,Y\right) ,H\left( X,Y\right) \right) \rightarrow \left(
V,G\right) $, such that $\alpha _{\mid X}=f_{1}$, $\beta _{\mid Y}=f_{2}$.
\end{definition}

It is easy to prove \cite{PV}, that $\left( U\left( X,Y\right) ,H\left(
X,Y\right) \right) =(XKF(Y),F(Y))$, where $F(Y)$ is the free group with the
free set of generators $Y$, $KF(Y)$ is the group ring over this group and $%
XKF(Y)=\bigoplus\limits_{x\in X}xKF(Y)$ is the free $KF(Y)$-module with the
free basis $X$.

\section{Basic notions of the algebraic geometry of representations.}

From here and below we suppose that the sets $X$ and $Y$ of generators of
the free representations $(XKF(Y),F(Y))$ are finite. Every equation in the
algebraic geometry of representation is equivalent to the equation $v=0$,
where $v\in XKF(Y)$ or to the equation $f=1$, where $f\in F(Y)$ for
arbitrary $X$ and $Y$. So, in algebraic geometry of representations we can
consider the system of equations $T=(T_{1},T_{2})$, where $T_{1}\subseteq
XKF(Y)$, $T_{2}\subseteq F(Y)$, or, briefly, $(T_{1},T_{2})\subseteq
(XKF(Y),F(Y))$. If we will resolve this system of equations in the
representation $\left( V,G\right) \in REP-K$, then the set $\mathrm{Hom}%
((XKF(Y),F(Y)),(V,G))$ has for us the role of the affine space. The solution
of the system $(T_{1},T_{2})$ in $\left( V,G\right) $ will be the set%
\[
(T_{1},T_{2})_{(V,G)}^{^{\prime }}= 
\]%
\[
\{(\alpha ,\beta )\in \mathrm{Hom}((XKF(Y),F(Y)),(V,G))\ |\ T_{1}\subseteq
\ker (\alpha ),T_{2}\subseteq \ker (\beta )\}\text{.} 
\]%
The algebraic $(V,G)$-closure of the system $(T_{1},T_{2})$ will be the set 
\[
(T_{1},T_{2})_{(V,G)}^{^{\prime \prime }}=(\bigcap\limits_{(\alpha ,\beta
)\in (T_{1},T_{2})_{(V,G)}^{^{\prime }}}\ker (\alpha ),\bigcap_{(\alpha
,\beta )\in (T_{1},T_{2})_{(V,G)}^{^{\prime }}}\ker (\beta ))\subseteq 
\]%
\[
(XKF(Y),F(Y)). 
\]%
This is the maximal system of equations, which has the same solutions as the
system $(T_{1},T_{2})$.

\begin{definition}
Let $(V_{1},G_{1}),(V_{2},G_{2})\in REP-K$. We say that $(V_{1},G_{1})$ and $%
(V_{2},G_{2})$ are \textbf{geometrically equivalent} if $%
(T_{1},T_{2})_{(V_{1},G_{1})}^{^{\prime \prime
}}=(T_{1},T_{2})_{(V_{2,},G_{2})}^{^{\prime \prime }}$, for every $%
(T_{1},T_{2})\subseteq (XKF(Y),F(Y))$ and every $X$ and $Y$. We use the
notation $(V_{1},G_{1})\sim (V_{2},G_{2})$.
\end{definition}

The notion of geometric equivalence can be defined in arbitrary variety of
universal algebras, as one-sorted, as many-sorted. The reader can consult in 
\cite{PlotkinVarCat}.

In our theory

\begin{definition}
The logic formulas which have the form:%
\begin{equation}
(\bigwedge\limits_{i=1}^{n}w_{i})\ \Rightarrow \ w_{0},  \label{qid}
\end{equation}%
where $w_{i}$ can be either $(v_{i}=0)$ or $(f_{i}=1)$, for $v_{i}\in XKF(Y)$
or $f_{i}\in F(Y)$,$\ 0\leq {i\leq {n}}$, $n\in \mathbb{N}$, are called 
\textbf{quasi-identities.}
\end{definition}

\begin{definition}
Let $(V,G)\in REP-K.$ We say that $(V,G)$ \textbf{fulfills} the
quasi-identity (\ref{qid}) if, for every $(\alpha ,\beta )\in \mathrm{Hom}%
((XKF(Y),F(Y)),(V,G))$ such that $\alpha (v_{i})=0$, when $w_{i}$ is $%
(v_{i}=0)$, or $\beta (f_{i})=1$, when $w_{i}$ is $(f_{i}=1)$, $1\leq {i\leq 
{n}}$, we have that $\alpha (v_{0})=0$, when $w_{0}$ is $(v_{0}=0)$, or $%
\beta (f_{0})=1$, when $w_{0}$ is $(f_{0}=1)$. We denote: 
\[
(V,G)\vDash ((\bigwedge\limits_{i=1}^{n}w_{i})\ \Rightarrow \ w_{0}). 
\]
\end{definition}

By \cite[Theorem 2]{PPT} we have the

\begin{proposition}
\label{ge_qid}Let $(V_{1},G_{1}),(V_{2},G_{2})\in REP-K$ and $%
(V_{1},G_{1})\sim (V_{2},G_{2})$ then $(V_{1},G_{1})$ and $(V_{2},G_{2})$
fulfill same\ quasi-identities.
\end{proposition}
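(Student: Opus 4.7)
The plan is to show the equivalence between fulfillment of a quasi-identity and a membership condition in the algebraic closure $T''_{(V,G)}$, after which the conclusion follows immediately from the hypothesis $(V_1,G_1)\sim(V_2,G_2)$.

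First, given any quasi-identity $q$ of the form $\left(\bigwedge_{i=1}^{n}w_{i}\right)\Rightarrow w_{0}$, I would associate to its premise the system $T=(T_{1},T_{2})\subseteq(XKF(Y),F(Y))$ defined by $T_{1}=\{v_{i}\mid 1\le i\le n,\ w_{i}\text{ is }(v_{i}=0)\}$ and $T_{2}=\{f_{i}\mid 1\le i\le n,\ w_{i}\text{ is }(f_{i}=1)\}$. By unwinding the definition of the solution set, one sees at once that for any $(V,G)\in REP\text{-}K$, the homomorphism $(\alpha,\beta)\in\mathrm{Hom}((XKF(Y),F(Y)),(V,G))$ lies in $T'_{(V,G)}$ if and only if $(\alpha,\beta)$ satisfies all the premises $w_{1},\ldots,w_{n}$ of $q$.

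Next, I would translate the conclusion $w_0$ into a membership statement about $T''_{(V,G)}$. Split into two cases according to the sort of $w_0$: if $w_{0}$ is $(v_{0}=0)$, then $(V,G)\vDash q$ means $\alpha(v_{0})=0$ for every $(\alpha,\beta)\in T'_{(V,G)}$, which by the definition of the algebraic closure is exactly the statement that $v_{0}$ belongs to the first component of $T''_{(V,G)}$. Symmetrically, if $w_{0}$ is $(f_{0}=1)$, then $(V,G)\vDash q$ is equivalent to $f_{0}$ lying in the second component of $T''_{(V,G)}$.

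Combining these two steps, fulfillment of $q$ by $(V,G)$ is completely determined by $T''_{(V,G)}$. Since by hypothesis $(V_{1},G_{1})\sim(V_{2},G_{2})$, Definition 2.\textit{(of geometric equivalence)} gives $T''_{(V_{1},G_{1})}=T''_{(V_{2},G_{2})}$ for the particular $T$ we constructed, so $(V_{1},G_{1})\vDash q$ iff $(V_{2},G_{2})\vDash q$. Since $q$ was arbitrary, the two representations fulfill the same quasi-identities.

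There is no real obstacle here; the only mildly delicate point is bookkeeping the two-sortedness, so that when one writes down $T$ and looks up $w_0$ in $T''_{(V,G)}$, the correct component (vector or group) is consulted. The proof is otherwise a direct translation between the syntactic notion of fulfillment and the semantic closure operator $(\cdot)''$.
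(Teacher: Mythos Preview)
Your argument is correct: associating to the premise of a quasi-identity the system $T=(T_{1},T_{2})$ and then observing that $(V,G)\vDash q$ is equivalent to the conclusion $w_{0}$ lying in the appropriate component of $T''_{(V,G)}$ is exactly the right translation, and the two-sorted bookkeeping you flag is handled correctly.

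As for comparison with the paper: the paper does not actually prove this proposition. It simply records it as a consequence of \cite[Theorem~2]{PPT} and moves on. Your write-up therefore supplies a self-contained proof where the paper only gives a citation; the argument you give is the standard one (and is essentially what underlies the cited result), so there is no genuine divergence in method, only in level of detail.
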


By \cite[Proposition 13]{PlotkinVarCat} we also have the

\begin{proposition}
\label{apr_ge}Let $\Theta $ some variety of universal algebras, $%
H_{1},H_{2}\in \Theta $ are finitely generated. Then $H_{1}\sim H_{2}$ if
and only if exist injections $H_{1}\hookrightarrow H_{2}^{I_{2}}$ and $%
H_{2}\hookrightarrow H_{1}^{I_{1}}$, where $I_{1},I_{2}$ are some sets of
indexes and $H_{1}^{I_{1}},H_{2}^{I_{2}}$ corresponding Cartesian powers of
algebras $H_{1}$ and $H_{2}$.
\end{proposition}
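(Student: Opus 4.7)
The plan is to prove both implications by exploiting the behavior of algebraic closures under Cartesian powers and embeddings.

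For the easy direction ($\Leftarrow$), I would first observe that for any system $T$ of equations in a free algebra $F$, the closure in a Cartesian power coincides with the closure in the base: $T''_{H_2^{I_2}} = T''_{H_2}$. This is because a homomorphism $F \to H_2^{I_2}$ is an $I_2$-tuple of homomorphisms $F \to H_2$, and both the condition of vanishing on $T$ and the condition of vanishing on a given element decouple coordinatewise. Second, given an injection $\iota : H_1 \hookrightarrow H_2^{I_2}$, I would push forward any solution $\alpha \in T'_{H_1}$ to $\iota \circ \alpha \in T'_{H_2^{I_2}}$; an element annihilated by all solutions into $H_2^{I_2}$ is then sent to zero by every $\iota \circ \alpha$, and injectivity of $\iota$ promotes this to annihilation by $\alpha$ itself. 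Hence $T''_{H_2^{I_2}} \subseteq T''_{H_1}$, and combined with the first observation, $T''_{H_2} \subseteq T''_{H_1}$. The symmetric embedding gives the reverse inclusion, proving $H_1 \sim H_2$.

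For the hard direction ($\Rightarrow$), I would assume $H_1 \sim H_2$ and use finite generation of $H_1$. Choosing finite generators produces a surjection $\pi : F \twoheadrightarrow H_1$ from the free algebra $F$ on the corresponding finite set, with kernel $T$. The crucial observation is that $T''_{H_1} = T$: the map $\pi$ itself lies in $T'_{H_1}$ and has kernel exactly $T$, so $T''_{H_1} \subseteq \ker \pi = T$, while the reverse inclusion is immediate from the definition of closure. Geometric equivalence then yields $T''_{H_2} = T$. The simultaneous evaluation map $\Phi : F \to H_2^{T'_{H_2}}$ defined by $v \mapsto (\alpha(v))_{\alpha \in T'_{H_2}}$ is a homomorphism with $\ker \Phi = \bigcap_{\alpha \in T'_{H_2}} \ker \alpha = T''_{H_2} = T$, so it descends to an injection $H_1 \cong F/T \hookrightarrow H_2^{I_2}$ with $I_2 = T'_{H_2}$. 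Swapping the roles of $H_1$ and $H_2$ (also finitely generated by hypothesis) yields the opposite embedding.

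The main obstacle I expect is the identity $T''_{H_1} = T$ for the kernel $T$ of a presentation of $H_1$; this rests on the observation that $\pi$ is itself one of the solutions over which the closure is computed, so no further shrinkage is possible. Once this is in hand, everything reduces to formal manipulation of the definitions of solution set and algebraic closure, and finite generation is used only to produce the presentation map $\pi$ from a free algebra on finitely many generators.
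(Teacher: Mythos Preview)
Your argument is correct and is essentially the standard proof of this characterization of geometric equivalence via mutual approximation. Note, however, that the paper does not actually supply a proof of this proposition: it is quoted verbatim as \cite[Proposition~13]{PlotkinVarCat} and used as a black box, so there is no in-paper argument to compare against. Your write-up therefore goes beyond what the paper does. The one point worth tightening is terminological: in a general variety $\Theta$ the ``kernel'' of $\pi:F\to H_1$ is a congruence on $F$ rather than a subset, and the closure $T''_{H}$ is likewise the congruence $\bigcap_{\alpha\in T'_H}\ker\alpha$; your identity $T''_{H_1}=T$ and the factorization $F/T\cong H_1$ should be read in that sense. With that understood, both directions go through exactly as you outline, and finite generation is used precisely where you say, to place $H_i$ under a finitely generated free algebra so that the relevant $T$ lives in one of the free objects over which geometric equivalence is tested.
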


In \cite{PT} the action type algebraic geometry of representations was also
considered. This geometry was elaborated on in order to avoid the influence
of the algebraic geometry of the acting group on the algebraic geometry of
representation.

In this geometry we consider only system of action type equations, i.e.,
system of equations which have form $T\subseteq XKF(Y)$. The set of
solutions of this system in the representation $(V,G)$ is the set%
\[
T_{(V,G)}^{^{\prime }}=\{(\alpha ,\beta )\in \mathrm{Hom}%
((XKF(Y),F(Y)),(V,G))\ |\ T\subseteq \ker (\alpha )\}\text{.} 
\]%
The action type $(V,G)$-closure of the system of equations $T$ is a set 
\begin{equation}
T_{(V,G)}^{^{\prime \triangledown }}=\bigcap\limits_{(\alpha ,\beta )\in
T_{(V,G)}^{^{\prime }}}\ker (\alpha )\subseteq XKF(Y).  \label{atc}
\end{equation}%
This is the maximal system of action type equations, which has the same
solutions as the system $T$.

\begin{definition}
Let $(V_{1},G_{1}),(V_{2},G_{2})\in REP-K$. We say that $(V_{1},G_{1})$ and $%
(V_{2},G_{2})$ are \textbf{action type geometrically equivalent} if $%
T_{(V_{1},G_{1})}^{^{\prime \triangledown }}=T_{(V_{2},G_{2})}^{^{\prime
\triangledown }}$, for every $T\subseteq XKF(Y)$ and every $X$ and $Y$. We
use the notation $(V_{1},G_{1})\sim _{at}(V_{2},G_{2})$.
\end{definition}

By \cite[Corollary 2 from Proposition 4.2]{PT}, if two representations $%
(V_{1},G_{1})$ and $(V_{2},G_{2})$ are geometrically equivalent then they
are action type geometrically equivalent.

If $(V,G)\in REP-K$, for every $v\in V$ we can consider the \textit{%
stabilizer} of $v$, defined by

\[
stab(v)=\left\{ g\in G\ |\ v\circ g=v\right\} , 
\]%
and the $\ker (V,G)$, defined by

\[
\ker (V,G)=\bigcap\limits_{v\in V}stab(v). 
\]%
It is easy to check that $\ker (V,G)$ is a normal subgroup of $G$. We denote
by $\tilde{G}$ the quotient group $G/\ker (V,G)$ and by $\sigma $ the
natural epimorphism $\sigma :G\rightarrow G/\ker (V,G)$. It also is easy
check that we obtain the representation $(V,\tilde{G})$ over the vector
spaces $V$ if we define the action of the group $\tilde{G}$ over the vector
spaces $V$ thusly%
\[
v\circ \sigma (g)=v\circ g, 
\]%
where $v\in V$,$\ g\in G$.

\begin{definition}
The representation $(V,\tilde{G})$ is called the \textbf{faithful image} of
the representation $(V,G)$.
\end{definition}

By \cite[Corollary 4 from Theorem 5.1]{PT}, we have the

\begin{proposition}
\label{atefi}Every representation $(V,G)\in REP-K$ is action type
geometrically equivalent to its faithful image $(V,\tilde{G})$.
\end{proposition}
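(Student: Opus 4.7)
The plan is to fix an arbitrary $T \subseteq XKF(Y)$ and exhibit, in both directions, a correspondence between $T_{(V,G)}^{\prime }$ and $T_{(V,\tilde{G})}^{\prime }$ that preserves the first coordinate $\alpha$. Since the action type closure only depends on $\bigcap \ker(\alpha)$, once such a correspondence is in place the equality $T_{(V,G)}^{\prime \triangledown } = T_{(V,\tilde{G})}^{\prime \triangledown }$ is immediate.

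For the forward direction, define $\Phi(\alpha, \beta) = (\alpha, \sigma \circ \beta)$ for $(\alpha, \beta) \in \mathrm{Hom}((XKF(Y),F(Y)),(V,G))$. The pair $(\alpha, \sigma \circ \beta)$ is a representation homomorphism into $(V, \tilde{G})$ because
\[
\alpha(v \circ f) = \alpha(v) \circ \beta(f) = \alpha(v) \circ \sigma(\beta(f)),
\]
where the last equality is precisely the defining identity $v \circ g = v \circ \sigma(g)$ of the action of $\tilde{G}$ on $V$. Since $\Phi$ does not touch the $\alpha$ component, it sends $T_{(V,G)}^{\prime }$ into $T_{(V,\tilde{G})}^{\prime }$ while preserving $\ker(\alpha)$.

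For the reverse direction, given $(\alpha', \beta') \in \mathrm{Hom}((XKF(Y),F(Y)),(V,\tilde{G}))$, I use the freeness of $F(Y)$: since $\sigma$ is surjective, for each $y \in Y$ I choose $g_y \in G$ with $\sigma(g_y) = \beta'(y)$, and by the universal property of $F(Y)$ the assignment $y \mapsto g_y$ extends uniquely to a group homomorphism $\beta : F(Y) \to G$ satisfying $\sigma \circ \beta = \beta'$. Then $(\alpha', \beta)$ is a homomorphism into $(V, G)$, because for all $v \in XKF(Y)$ and $f \in F(Y)$,
\[
\alpha'(v \circ f) = \alpha'(v) \circ \beta'(f) = \alpha'(v) \circ \sigma(\beta(f)) = \alpha'(v) \circ \beta(f).
\]
Hence $\Phi(\alpha', \beta) = (\alpha', \beta')$, the map $\Phi$ is surjective, and the lift keeps $\ker(\alpha')$ unchanged. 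Since $T \subseteq \ker(\alpha')$ is a condition on the first coordinate only, the lift sits in $T_{(V,G)}^{\prime }$.

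Combining both directions yields the equality of sets $\{\ker(\alpha) : (\alpha, \beta) \in T_{(V,G)}^{\prime }\} = \{\ker(\alpha') : (\alpha', \beta') \in T_{(V,\tilde{G})}^{\prime }\}$, and intersecting gives $T_{(V,G)}^{\prime \triangledown } = T_{(V,\tilde{G})}^{\prime \triangledown }$ for every $T$, which is the required action type geometric equivalence. The only nontrivial point is checking that the lifted pair $(\alpha', \beta)$ respects the action compatibility, and this reduces at once to the built-in identity $v \circ g = v \circ \sigma(g)$ of the faithful image; all other steps are bookkeeping on free generators.
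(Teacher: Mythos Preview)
Your argument is correct. The key observations---that post-composing the group component with the natural surjection $\sigma$ yields a homomorphism into $(V,\tilde G)$ with the same first coordinate, and that freeness of $F(Y)$ lets you lift any $\beta':F(Y)\to\tilde G$ through $\sigma$ without disturbing $\alpha'$---are exactly what is needed, and the verification of action compatibility in both directions reduces to the defining identity $v\circ g=v\circ\sigma(g)$ as you say.

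As for comparison: the paper does not give its own proof of this proposition; it simply invokes \cite[Corollary~4 from Theorem~5.1]{PT}. In that reference the result is obtained as a corollary of a more general criterion for action type geometric equivalence, whereas your argument is a direct, self-contained computation at the level of the definition of $T^{\prime\triangledown}$. What your approach buys is that it avoids importing any external machinery and makes the role of freeness of $F(Y)$ explicit; what the cited approach buys is a reusable criterion that applies beyond the specific case of a representation and its faithful image.
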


\section{The relation between geometrical equivalence and action type
geometrical equivalence of\protect\linebreak group representations.}

In this Section we will discuss the following question:

\begin{problem}
\label{main}Let $(V_{1},G_{1}),(V_{2},G_{2})\in {REP-K}$. Can we conclude
the geometrical equivalence $(V_{1},G_{1})\sim (V_{2},G_{2})$ from the
action type geometrical equivalence $(V_{1},G_{1})\sim _{at}(V_{2},G_{2})$
and the geometrical equivalence of groups $G_{1}\sim {G_{2}}$?
\end{problem}

The negative answer to this question was remarked by \cite[Remark 5.1]{PT},
but no counterexample was presented. This question has a sense, because in
the action type algebraic geometry of representations we only consider the
specific systems of equations which have form $T\subseteq XKF(Y)$ and the
specific form of the algebraic closure (\ref{atc}). By this restriction we
avoid the influence of the algebraic geometry of action groups. But one
question which comes naturally is whether we are losing some important
information about representations by this restriction. The negative answer
to the Problem \ref{main} shows that we indeed lose some information.

\begin{theorem}
Let $(V_{1},G_{1}),(V_{2},G_{2})\in {REP-K}$. From conditions

\begin{enumerate}
\item $(V_{1},G_{1})\sim _{at}(V_{2},G_{2})$ and

\item $G_{1}\sim {G_{2}}$

we can not conclude $(V_{1},G_{1})\sim (V_{2},G_{2})$.
\end{enumerate}
\end{theorem}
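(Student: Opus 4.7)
The plan is to exhibit an explicit counterexample, i.e.\ a concrete pair $(V_1, G_1), (V_2, G_2) \in REP{-}K$ for which (1) and (2) hold but $(V_1,G_1) \not\sim (V_2,G_2)$. Motivated by Proposition~\ref{atefi}, the natural template is to take a representation $(V,G)$ with a nontrivial kernel $N := \ker(V,G)$, set $\tilde G = G/N$, and compare $(V,G)$ with its faithful image, putting $(V_1,G_1) = (V,G)$ and $(V_2,G_2) = (V,\tilde G)$. Condition (1) is then automatic from Proposition~\ref{atefi}.

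To secure condition (2) I would engineer $G$ and $N$ so that $G \sim \tilde G$ as groups. By Proposition~\ref{apr_ge}, for finitely generated $G$ this reduces to exhibiting mutual embeddings $G \hookrightarrow \tilde G^{I}$ and $\tilde G \hookrightarrow G^{I'}$; natural abelian choices (for example $G = \mathbb{Z}^{k}$ acting on $V$ through a single projection onto $\tilde G = \mathbb{Z}$) make this verification routine.

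The real work is showing that $(V_1,G_1) \not\sim (V_2,G_2)$. By Proposition~\ref{ge_qid} it is enough to display a quasi-identity of the form (\ref{qid}) satisfied by one of the two representations but not by the other. Since all purely action-type quasi-identities coincide by (1) and all purely group-theoretic quasi-identities coincide by (2), the distinguishing quasi-identity must be genuinely \emph{mixed}, with hypotheses $w_i$ of one sort and conclusion $w_0$ of the other; the coupling via the action operation $\circ$ is exactly what escapes both $\sim_{at}$ and the group-level equivalence $G_1 \sim G_2$.

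The main obstacle is making sure the construction really avoids geometric equivalence. The tempting ``easy'' examples collapse: if $V \neq 0$ and $G$ is residually $\tilde G$ (which is forced by (2) via Proposition~\ref{apr_ge}), then one can always produce a representation embedding $(V,G) \hookrightarrow (V,\tilde G)^{I}$ using a ``zero-padding'' trick—place the only nonzero component of $\alpha$ in a coordinate where $\beta_i = \pi$ (the natural projection), while in the remaining coordinates take $\alpha_i = 0$ and use auxiliary homomorphisms $\beta_j\colon G \to \tilde G$ to make $\beta$ injective on $N$. The symmetric embedding $(V,\tilde G) \hookrightarrow (V,G)^{I'}$ is usually trivial. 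To defeat this, the construction must pick $V$, $G$ and $N$ rigid enough that the padding fails—for instance a vector space or module structure in which every representation embedding into a power must occupy essentially all coordinates, or an extension $1 \to N \to G \to \tilde G \to 1$ whose interaction with the $V$-module structure leaves no room for auxiliary $\beta_j$'s with $\alpha_j = 0$. The mixed quasi-identity witnessing the non-equivalence should then encode exactly this obstruction, relating elements of $N$ to the action on $V$ in a way invisible to both $\sim_{at}$ and group geometric equivalence.
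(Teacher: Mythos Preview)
Your outline follows exactly the template the paper uses: compare a representation $(V,G)$ having nontrivial kernel with its faithful image $(V,\tilde G)$, obtain condition (1) from Proposition~\ref{atefi}, obtain condition (2) from Proposition~\ref{apr_ge}, and then separate the two representations by a mixed quasi-identity via Proposition~\ref{ge_qid}. The paper carries this out concretely with $V=K^{2}$, $G_{2}=\langle a\rangle\times\langle b\rangle\cong\mathbb{Z}_{2}\times\mathbb{Z}_{2}$ (where $a$ swaps the two basis vectors and $b$ acts trivially), $G_{1}=\widetilde{G_{2}}=\langle a\rangle\cong\mathbb{Z}_{2}$, and the separating quasi-identity $(x\circ y-x=0)\Rightarrow(y=1)$. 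Your proposal, by contrast, never commits to a specific example; it remains a plan.

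Your hesitation about ``easy'' examples is, however, exactly on target---so much so that it applies to the paper's own construction. The zero-padding embedding you describe works verbatim here: $(\alpha,\beta)\colon(V,G_{2})\hookrightarrow(V,G_{1})^{2}$ given by $\alpha(v)=(v,0)$ and $\beta(a^{i}b^{j})=(a^{i},a^{j})$ is a monomorphism of representations, and $(V,G_{1})\hookrightarrow(V,G_{2})$ via the obvious inclusion of $\langle a\rangle$, so Proposition~\ref{apr_ge} (whose proof goes through unchanged for many-sorted algebras) would force $(V,G_{1})\sim(V,G_{2})$. Correspondingly, the paper's quasi-identity is \emph{not} satisfied by $(V,G_{1})$: sending $x\mapsto 0$ (or $x\mapsto e_{1}+e_{2}$) and $y\mapsto a$ yields $x\circ y-x=0$ with $y\neq 1$; having $\ker(V,G_{1})=\{1\}$ only says the stabilizers \emph{intersect} trivially, not that every stabilizer is trivial. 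So the obstruction you flag is real, and neither your sketch nor the paper's argument, as written, delivers a valid counterexample---one genuinely needs a construction rigid enough to block the zero-padding.
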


\begin{proof}
We consider a vector space $V$ over arbitrary field $K$, such that $\dim
(V)=2$, with the basis $\{e_{1},e_{2}\}$. We also consider the groups $%
G_{1}=\left\langle a\right\rangle \cong \mathbb{Z}_{2}$ and $%
G_{2}=\left\langle a\right\rangle \times \left\langle b\right\rangle \cong 
\mathbb{Z}_{2}\times \mathbb{Z}_{2}$. We define the action of $G_{1}$ and $%
G_{2}$ over $V$ thusly: 
\[
e_{1}\circ {a}=e_{2},\ e_{2}\circ {a}=e_{1}, 
\]%
\[
e_{1}\circ {b}=e_{1},\ e_{2}\circ {b}=e_{2}. 
\]

Hence, we obtain the representations $(V_{1},G_{1})$ and $(V_{2},G_{2})$. $%
\ker (V,G_{2})=\left\langle b\right\rangle \cong \mathbb{Z}_{2}$. $%
G_{2}/\ker (V,G_{2})\cong G_{1}=\left\langle a\right\rangle \cong \mathbb{Z}%
_{2}$. Therefore, $(V,\widetilde{G_{2}})$ the faithful image of the
representation $(V,G_{2})$ is isomorphic to the representation $%
(V_{1},G_{1}) $. So, by Proposition \ref{atefi} $(V,G_{1})\sim
_{at}(V,G_{2}) $.

The injections $G_{1}\hookrightarrow G_{2}$ and $G_{2}\hookrightarrow
G_{1}\times G_{1}$ exist. Therefore, by Proposition \ref{apr_ge}, $G_{1}\sim 
{G_{2}}$.

\bigskip Now, we consider the quasi-identity%
\[
(x\circ y-x=0)\Rightarrow (y=1). 
\]%
We have that 
\[
(V,G_{1})\vDash ((x\circ y-x=0)\Rightarrow (y=1)), 
\]%
because $\ker (V,G_{1})=\left\{ 1\right\} $, and 
\[
(V,G_{2})\nvDash ((x\circ y-x=0)\Rightarrow (y=1))\text{,} 
\]%
because $\ker (V,G_{2})\neq \left\{ 1\right\} $. It means by Proposition \ref%
{ge_qid} that $(V_{1},G_{1})\nsim (V_{2},G_{2})$.
\end{proof}

\section{Acknowledgements}

We acknowledge the support of Coordena\c{c}\~{a}o de Aperfei\c{c}oamento de
Pessoal de N\'{\i}vel Superior - CAPES (Coordination for the Improvement of
Higher Education Personnel, Brazil). 

We are thankful to Prof. E. Aladova for her important remarks, which helped
a lot in writing this article.

\end{document}